\renewcommand{\Im}{\operatorname{Im}}
\renewcommand{\Re}{\operatorname{Re}}
\renewcommand{\Im}{\operatorname{Im}}
\renewcommand{\(}{\left\(}
\renewcommand{\)}{\right\)}
\renewcommand{\[}{\left\[}
\renewcommand{\]}{\right\]}
\numberwithin{equation}{section}
 \theoremstyle{plain}
\newtheorem{theorem}{Theorem}[section]
\newtheorem{lemma}[theorem]{Lemma}
\def\proof{\@ifnextchar[{\@oproof}{\@nproof}}
\def\@oproof[#1][#2]{\trivlist\item[\hskip\labelsep\textit{#2 Proof of\
#1.}~]\ignorespaces}
\def\@nproof{\trivlist\item[\hskip\labelsep\textit{Proof.}~]\ignorespaces}
\begin{document}
\title[Riesz-type criteria for the Riemann hypothesis]{Riesz-type criteria for the Riemann hypothesis }

\author{Archit Agarwal}
\address{Archit Agarwal\\ Department of Mathematics \\
Indian Institute of Technology Indore \\
Simrol,  Indore,  Madhya Pradesh 453552, India.} 
\email{archit.agrw@gmail.com,   phd2001241002@iiti.ac.in }

\author{Meghali Garg}
\address{Meghali Garg \\ Department of Mathematics \\
Indian Institute of Technology Indore \\
Simrol,  Indore,  Madhya Pradesh 453552, India.} 
\email{meghaligarg.2216@gmail.com,   phd2001241005@iiti.ac.in}

 \author{Bibekananda Maji}
\address{Bibekananda Maji\\ Department of Mathematics \\
Indian Institute of Technology Indore \\
Simrol,  Indore,  Madhya Pradesh 453552, India.} 
\email{bibek10iitb@gmail.com,  bibekanandamaji@iiti.ac.in}

\thanks{2010 \textit{Mathematics Subject Classification.} Primary 11M06; Secondary 11M26 .\\
\textit{Keywords and phrases.} Riemann zeta function,  Non-trivial zeros,  M\"{o}bious function, Riemann hypothesis.}

\maketitle

\begin{abstract}
In 1916,  Riesz proved that the Riemann hypothesis is equivalent to the bound
$
\sum_{n=1}^\infty \frac{\mu(n)}{n^2} \exp\left( - \frac{x}{n^2} \right) = O_{\epsilon} \left( x^{-\frac{3}{4} + \epsilon} \right),   
$
as $x \rightarrow\infty$, for any $\epsilon >0$.
Around the same time,  Hardy and Littlewood gave another equivalent criteria for the Riemann hypothesis while correcting an identity of Ramanujan.  In the present paper,  we establish a one-variable generalization of the identity of Hardy and Littlewood and as an application, we provide Riesz-type criteria for the Riemann hypothesis.  In particular,  we obtain the bound given by Riesz as well as the bound of Hardy and Littlewood. 
\end{abstract}

\section{introduction}

Ramanujan discovered many beautiful identities in his short life span of 32 years,  and most of the identities are correct, but a few are incorrect. Over the years, these erroneous identities also inspired mathematicians to develop many interesting results.  One of such incorrect identities is the following identity:
For any  $x>0$,
\begin{align}\label{Ramanujan_mu}
\sum_{n=1}^{\infty} \frac{\mu(n)}{n} \exp\left({-\frac{x}{n^2}}\right) = \sqrt{\frac{\pi}{x}} \sum_{n=1}^{\infty} \frac{\mu(n)}{n} \exp\left(- \frac{\pi^2}{n^2 x} \right),
\end{align}
where $\mu(n)$ denotes the M\"{o}bius function.  Ramanujan, during his stay at Trinity, showed the above identity to Hardy and Littlewood from his Second Notebook \cite[p.~312]{Rama_2nd_Notebook}.  Unfortunately,  all the identities on page 312 are incorrect. Bruce Berndt\footnote{There is a typo in \cite[Equation (37.3), p.~470]{BCB-V},  in which the expression $\frac{\pi}{p}$ in the right-side sum over $\rho$ should be replaced by $\frac{\pi}{\sqrt{p}}$. } \cite[p.~468-469]{BCB-V} gave a very nice numerical explanation about the wrongness of the identity  \eqref{Ramanujan_mu}.
 Hardy and Littlewood  \cite[p.~156, Section 2.5]{HL-1916}  established a corrected version of \eqref{Ramanujan_mu}.  Mainly,  they proved that,  for any $x>0$,  
 \begin{align}\label{Hardy-Littlewood}
 \sum_{n=1}^{\infty} \frac{\mu(n)}{n} \exp\left({-\frac{x}{n^2}}\right) = \sqrt{\frac{\pi}{x}} \sum_{n=1}^{\infty} \frac{\mu(n)}{n} \exp\left(- \frac{\pi^2}{n^2 x} \right)- \frac{1}{2 \sqrt{\pi}} \sum_{\rho} \left(  \frac{\pi}{\sqrt{x}} \right)^{\rho}  \frac{ \Gamma\left(\frac{1-\rho}{2} \right) }{\zeta'(\rho)},
 \end{align}
where the right-side sum over $\rho$ runs through the non-trivial zeros of the Riemann zeta function $\zeta(s)$.  The identity \eqref{Hardy-Littlewood} holds under the assumption that, all non-trivial zeros of $\zeta(s)$ are simple.  Replacing $x$ by $\alpha^2$ and $\beta = \pi/\alpha$,  one can immediately show that
\begin{align}\label{Rama_Hardy_Little}
\sqrt{\alpha} \sum_{n=1}^{\infty} \frac{\mu(n)}{n} \exp\left({-\left(\frac{\alpha}{n}\right)^2}\right) - \sqrt{\beta} \sum_{n=1}^{\infty} \frac{\mu(n)}{n} \exp\left({-\left(\frac{\beta}{n}\right)^2}\right) = -\frac{1}{2\sqrt{\beta}} \sum_{\rho} \frac{ \Gamma\left(\frac{1-\rho}{2} \right) \beta^\rho}{\zeta'(\rho)}.
\end{align}
The convergence of the above right-side sum is delicate.  
Hardy and Littlewood showed that this infinite sum over the non-trivial zeros of $\zeta(s)$ is convergent under the assumption of bracketing the terms, that is,   the terms corresponding to the non-trivial zeros $\rho_1$ and $\rho_2$ will be inside the same bracket if for some positive constant $A_0$, we have
\begin{align}\label{bracketing}
|\Im(\rho_1) - \Im(\rho_2)| < \exp \left( -\frac{A_0 \Im(\rho_1)}{\log(\Im(\rho_1) )} \right) + \exp \left( -\frac{A_0 \Im(\rho_2)}{\log(\Im(\rho_2) )} \right).
\end{align}

To know more about the identity \eqref{Rama_Hardy_Little},  readers can see Berndt \cite[p.~470]{BCB-V}, Paris and Kaminski \cite[p.~143]{PK}, and Titchmarch \cite[p.~219]{Tit}.  

Riesz \cite{Riesz},  in 1916,  proved that the Riemann hypothesis is equivalent to the fact that
\begin{equation}\label{Riesz}
P_2(x):=  \sum_{n=1}^\infty \frac{\mu(n)}{n^2} \exp\left( - \frac{x}{n^2} \right) = O_{\epsilon} \left( x^{-\frac{3}{4} + \epsilon} \right), \quad {\rm as}\,\,  x \rightarrow \infty,
\end{equation}
for any positive $\epsilon$.  Around the same time,  as an application of \eqref{Rama_Hardy_Little},  Hardy and Littlewood \cite[p.~161]{HL-1916} showed that the Riemann hypothesis is equivalent to the following bound
\begin{align}\label{Riesz type_Hardy_Littlewood}
P_1(x):= \sum_{n=1}^\infty  \frac{\mu(n)}{n} \exp\left({-\frac{x}{n^2}}\right) = O_{\epsilon}\left( x^{-\frac{1}{4}+ \epsilon } \right), \quad \mathrm{as}\,\, x \rightarrow \infty,
\end{align}
for any positive $\epsilon$.

In the current paper,  we establish a one-variable generalization of the identity \eqref{Hardy-Littlewood} and as an application of our result,  we obtain an equivalent criteria for the Riemann hypothesis.  In particular,  we are able to recover the equivalent criteria \eqref{Riesz} given by Riesz,  as well as the bound \eqref{Riesz type_Hardy_Littlewood} given by Hardy and Littlewood. 

Now we define an important special function.
Let $c_1, \cdots, c_p$ and $d_1, \cdots, d_q$ be  $p+q$ complex numbers.  The generalized hypergeometric series \cite[p. 404, Equation 16.2.1]{NIST} 
 is defined by the following series expansion
\begin{align*}
{}_pF_q \left( c_1, \cdots, c_p;\,  d_1, \cdots, d_q;\, z \right):= \sum_{n=0}^{\infty} \frac{(c_1)_n \cdots (c_p)_n}{(d_1)_n\cdots (d_q)_n} \frac{z^n}{n!},
\end{align*}
where $(c)_n:=\frac{\Gamma(c+n)}{\Gamma(c)}$.  This series converges for all complex values of $z$ if  $p \leq q$.  However, if $p=q+1$,  then it converges for $|z|<1$,   and in this case,  it can be analytically continued to $ \mathbb{C}$ if we consider a branch cut from $1$ to $+\infty$.  In particular,  when $p=q=1$,  that is,  ${}_1F_1 \left( c_1 ;\,  d_1;\, z \right)$ is an entire function and if $c_1=d_1$, it reduces to the exponential function.

Now we are ready to state the main identity.

\begin{theorem}\label{first_generalization}
Let $k \geq 1$ be a real number.  Assume that all non-trivial zeros of $\zeta(s)$ are simple. Then for any positive $x$, we have
\begin{align}\label{AGM}
    \sum_{n=1}^{\infty} \frac{\mu(n)}{n^k} \exp\left({-\frac{x}{n^2}}\right) =\frac{\Gamma(\frac{k}{2})}{x^\frac{k}{2}}\sum_{n=1}^{\infty}\frac{\mu(n)}{n}{}_1F_1 \left(\frac{k}{2}; \frac{1}{2}; - \frac{\pi^2}{n^2x} \right) + \frac{1}{2}\sum_{\rho}\frac{\Gamma(\frac{k-\rho}{2})}{\zeta'(\rho)}x^{-\frac{(k-\rho)}{2}},
\end{align}
where the sum over $\rho$ runs through all non-trivial zeros of $\zeta(s)$ and satisfying the condition\eqref{bracketing}. 
\end{theorem}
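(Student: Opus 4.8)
The plan is to represent the left-hand side of \eqref{AGM} as a Mellin--Barnes integral, shift the line of integration across the non-trivial zeros to produce the sum over $\rho$, and then turn the remaining integral into the confluent hypergeometric series by applying the functional equation of $\zeta(s)$. I would first assume $k>1$ and recover the boundary case $k\ge 1$ at the end by analytic continuation in $k$, since both sides of \eqref{AGM} are analytic in $k$ in a neighbourhood of $[1,\infty)$. Starting from the Cahen--Mellin integral $e^{-y}=\frac{1}{2\pi i}\int_{(c)}\Gamma(s)\,y^{-s}\,ds$ (valid for $\Re(s)=c>0$), I substitute $y=x/n^2$, multiply by $\mu(n)/n^{k}$ and sum over $n$. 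For $0<c<\frac{k-1}{2}$ the double series converges absolutely because $\sum_n n^{2c-k}<\infty$, so interchanging summation and integration and using $\sum_n \mu(n)n^{-s}=1/\zeta(s)$ gives
\begin{equation*}
\sum_{n=1}^{\infty}\frac{\mu(n)}{n^{k}}\exp\left(-\frac{x}{n^{2}}\right)=\frac{1}{2\pi i}\int_{(c)}\frac{\Gamma(s)}{\zeta(k-2s)}\,x^{-s}\,ds.
\end{equation*}

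Next I would move the contour to the line $\Re(s)=c'$ with $\frac{k}{2}<c'<\frac{k}{2}+1$. Between the two lines the integrand is holomorphic apart from simple poles at $s=\frac{k-\rho}{2}$, coming from the non-trivial zeros $k-2s=\rho$ of $\zeta(k-2s)$; at $k-2s=1$ the factor $1/\zeta(k-2s)$ vanishes (so there is no pole), and the trivial zeros produce poles only beyond $\Re(s)=c'$. Since $\frac{d}{ds}\zeta(k-2s)=-2\zeta'(k-2s)$, the residue at $s=\frac{k-\rho}{2}$ equals $-\frac{1}{2}\frac{\Gamma(\frac{k-\rho}{2})}{\zeta'(\rho)}x^{-(k-\rho)/2}$. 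As moving the contour to the right subtracts the enclosed residues, this step contributes exactly $+\frac{1}{2}\sum_{\rho}\frac{\Gamma(\frac{k-\rho}{2})}{\zeta'(\rho)}x^{-(k-\rho)/2}$, the second term of \eqref{AGM}.

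It remains to identify the integral over $\Re(s)=c'$. There $\Re(k-2s)<0$, so the symmetric functional equation $\pi^{-w/2}\Gamma(\frac{w}{2})\zeta(w)=\pi^{-(1-w)/2}\Gamma(\frac{1-w}{2})\zeta(1-w)$ with $w=k-2s$ yields
\begin{equation*}
\frac{1}{\zeta(k-2s)}=\pi^{\frac{1}{2}-k+2s}\,\frac{\Gamma\left(\frac{k}{2}-s\right)}{\Gamma\left(s+\frac{1-k}{2}\right)}\,\frac{1}{\zeta(1-k+2s)},
\end{equation*}
and now $1/\zeta(1-k+2s)=\sum_n \mu(n)\,n^{k-1-2s}$ converges absolutely. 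Interchanging and collecting the powers of $\pi$, $n$ and $x$, the substitution $s\mapsto \frac{k}{2}-u$ turns each summand into the Mellin--Barnes representation ${}_1F_1\left(\frac{k}{2};\frac{1}{2};-z\right)=\frac{\Gamma(1/2)}{\Gamma(k/2)}\frac{1}{2\pi i}\int \frac{\Gamma(u)\Gamma(\frac{k}{2}-u)}{\Gamma(\frac{1}{2}-u)}z^{-u}\,du$ with $z=\pi^{2}/(n^{2}x)$. The substitution places the $u$-contour to the left of the origin; pulling it across the simple pole of $\Gamma(u)$ at $u=0$ (whose residue is $\Gamma(k/2)/\Gamma(1/2)$) produces $\frac{\Gamma(k/2)}{x^{k/2}}\sum_n\frac{\mu(n)}{n}\left({}_1F_1\left(\frac{k}{2};\frac{1}{2};-\frac{\pi^{2}}{n^{2}x}\right)-1\right)$, and the spurious constant drops out because $\sum_{n}\mu(n)/n=0$. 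This is precisely the first term of \eqref{AGM}.

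The main obstacle is not this formal bookkeeping but the analysis underlying the contour shift in the second step, where the residue sum over $\rho$ is only conditionally convergent, exactly as in \eqref{Rama_Hardy_Little}. I would bound the horizontal segments of the rectangular contour by balancing the exponential decay of $\Gamma(s)$ against the at most polynomial growth of $1/\zeta(k-2s)$, letting the heights $T\to\infty$ along values that keep the contour away from the zeros, and I would establish convergence of the residue sum only after grouping the zeros according to the bracketing condition \eqref{bracketing}, following Hardy and Littlewood. The extra factor $\Gamma\left(\frac{k-\rho}{2}\right)$ decays exponentially in $\Im(\rho)$ and so only improves the situation, making their argument carry over. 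The interchanges of summation and integration in the other steps are justified by absolute convergence in the chosen strips, and the final extension to all $k\ge 1$ follows by analytic continuation in $k$.
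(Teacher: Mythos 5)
Your proposal follows essentially the same route as the paper: Mellin--Barnes representation of the left-hand side, rightward contour shift across the non-trivial zeros to produce the residue sum, functional equation plus the Dirichlet series of $1/\zeta(1-k+2s)$, and identification of the remaining integral with ${}_1F_1$ (your direct appeal to the Mellin--Barnes representation of ${}_1F_1$ is just a shortcut past the paper's Meijer $G$-function/Slater's theorem step, and your residue computations agree with the paper's). However, two of your steps have genuine gaps. The first is the treatment of $k=1$ by ``analytic continuation in $k$.'' The assertion that both sides of \eqref{AGM} are analytic in $k$ in a neighbourhood of $[1,\infty)$ is precisely the delicate point: the sum over $\rho$ converges only conditionally and only after the bracketing \eqref{bracketing}, so to pass to $k\to 1^{+}$ you would need that bracketed series to converge locally uniformly in $k$, which you do not establish. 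The paper avoids this entirely: instead of the Cahen--Mellin integral on $0<c<\frac{k-1}{2}$ (which forces $k>1$), it uses $e^{-y}-1=\frac{1}{2\pi i}\int_{(c)}\Gamma(s)\,y^{-s}\,{\rm d}s$ with $-1<c<0$, so that $\Re(k-2s)>1$ for every $k\ge 1$; the constant $\sum_{n}\mu(n)n^{-k}=1/\zeta(k)$ this introduces is later cancelled by the residue of $\Gamma(s)$ at $s=0$, a pole your contour (starting to the right of the origin) never meets. Alternatively one could quote \eqref{Hardy-Littlewood} for $k=1$, but as written the continuation argument is a hole.

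The second gap is your claim that $1/\zeta(k-2s)$ has ``at most polynomial growth'' on the horizontal segments. No polynomial bound for $1/\zeta$ inside the critical strip is known unconditionally; even along heights chosen away from ordinates of zeros, the standard arguments give only bounds of the shape $\exp\left(C\log^{2}T\right)$, or the bound the paper actually uses (Lemma \ref{bound_1by_zeta(s)}): $1/|\zeta(\sigma+iT)|<e^{C_{1}T}$ with $0<C_{1}<\pi/4$, valid along a sequence of heights whose existence is guaranteed by \eqref{bracketing}. Moreover, since $\Im(k-2s)=-2T$ when $\Im(s)=T$, this bound enters as $e^{2C_{1}T}$, and the horizontal integrals vanish precisely because $2C_{1}<\pi/2$ beats the decay $e^{-\pi T/2}$ from Stirling's formula; this is exactly why the constant $\pi/4$ matters, and why polynomial growth is both unproven and unnecessary. (A small further correction: compared with the $k=1$ case, the factor $\Gamma\left(\frac{k-\rho}{2}\right)$ is \emph{larger} by roughly $|\Im\rho|^{(k-1)/2}$, not smaller; what makes the Hardy--Littlewood bracketing argument carry over is the common exponential decay $e^{-\pi|\Im\rho|/4}$, not an improvement.) With these two repairs your argument coincides with the paper's proof.
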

Substituting $k=1$ in \eqref{AGM},  and replacing $x$ by $\pi^2/x$,  one can immediately obtain \eqref{Hardy-Littlewood}.  
Letting $x \rightarrow \infty $ in \eqref{AGM} and assuming the absolute convergence of the right side series over the non-trivial zeros of $\zeta(s)$,  one can immediately see that 
\begin{align*}
P_{k}(x):= \sum_{n=1}^{\infty} \frac{\mu(n)}{n^k} \exp\left({-\frac{x}{n^2}}\right) = O \bigg(x^{-\frac{k}{2}+\frac{1}{4}}\bigg), 
\end{align*}
under the assumption of the Riemann hypothesis. 
This observation suggests us to obtain the following equivalent criteria for the Riemann hypothesis. 
\begin{theorem}\label{Agarwal_Garg_Maji bound}
The Riemann hypothesis is equivalent to the bound
\begin{equation}\label{AGM_bound}
P_{k}(x) = O_{\epsilon} \bigg(x^{-\frac{k}{2}+\frac{1}{4} + \epsilon }\bigg), \quad \mathrm{as}\,\, x \rightarrow \infty,
\end{equation}
for any positive $\epsilon$. 
\end{theorem}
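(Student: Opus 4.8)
The plan is to read off the Riemann hypothesis from the analytic behaviour of the Mellin transform of $P_k$. A termwise integration gives, for $s$ in the strip $0<\Re(s)<\tfrac{k-1}{2}$ (nonempty once $k>1$),
\begin{equation*}
\widetilde{P_k}(s):=\int_0^\infty P_k(x)\,x^{s-1}\,dx=\Gamma(s)\sum_{n=1}^\infty\frac{\mu(n)}{n^{k-2s}}=\frac{\Gamma(s)}{\zeta(k-2s)},
\end{equation*}
the interchange being legitimate because $\Re(k-2s)>1$ there. The right-hand side is meromorphic on $\mathbb{C}$: its poles with $\Re(s)>0$ come only from the zeros of $\zeta(k-2s)$, i.e.\ from $s=\tfrac{k-\rho}{2}$ as $\rho$ runs over the non-trivial zeros, whereas the pole of $\zeta$ at $1$ gives a zero, not a pole, of the integrand at $s=\tfrac{k-1}{2}$. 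Mellin inversion then yields
\begin{equation*}
P_k(x)=\frac{1}{2\pi i}\int_{(c)}\frac{\Gamma(s)}{\zeta(k-2s)}\,x^{-s}\,ds .
\end{equation*}
These are exactly the poles that appear in the sum over $\rho$ in \eqref{AGM}, so the two formulations are consistent.

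\emph{Forward direction} (RH $\Rightarrow$ \eqref{AGM_bound}). Assume all non-trivial zeros satisfy $\Re(\rho)=\tfrac12$, so every pole $s=\tfrac{k-\rho}{2}$ sits exactly on the vertical line $\Re(s)=\tfrac{k}{2}-\tfrac14$. Fix $\epsilon>0$ and move the contour rightward to $\Re(s)=\tfrac{k}{2}-\tfrac14-\epsilon$. In doing so we pass $\Re(s)=\tfrac{k-1}{2}$, where the integrand is holomorphic, and we stop just to the left of the line of zero-poles, so no residues are collected; hence $P_k(x)$ equals the integral over the shifted line. There $|x^{-s}|=x^{-k/2+1/4+\epsilon}$, so the stated bound follows once the vertical integral is shown to converge to a constant. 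This requires controlling $\Gamma(s)/\zeta(k-2s)$ on $\Re(s)=\tfrac{k}{2}-\tfrac14-\epsilon$, equivalently $1/\zeta$ on the line $\Re=\tfrac12+2\epsilon$: Stirling gives the decay $|\Gamma(\sigma+it)|\asymp|t|^{\sigma-1/2}e^{-\pi|t|/2}$, while under RH one has $1/\zeta(\tfrac12+2\epsilon+i\tau)=O(|\tau|^{\delta})$ for every $\delta>0$, making the product integrable. The same estimates force the horizontal segments used in the shift to vanish, completing this direction.

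\emph{Reverse direction} (\eqref{AGM_bound} $\Rightarrow$ RH). Write $\widetilde{P_k}=\int_0^1+\int_1^\infty$. The hypothesis $P_k(x)=O_\epsilon(x^{-k/2+1/4+\epsilon})$ makes $\int_1^\infty P_k(x)x^{s-1}\,dx$ converge absolutely and locally uniformly for $\Re(s)<\tfrac{k}{2}-\tfrac14$ (choosing $\epsilon$ below the gap), hence holomorphic there; and since $P_k(x)$ stays bounded as $x\to0^+$ for $k>1$ (where it tends to $1/\zeta(k)$), the piece $\int_0^1$ is holomorphic for $\Re(s)>0$. Thus $\widetilde{P_k}$ is holomorphic in the whole strip $0<\Re(s)<\tfrac{k}{2}-\tfrac14$. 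As it coincides with $\Gamma(s)/\zeta(k-2s)$ on the subregion $0<\Re(s)<\tfrac{k-1}{2}$, the identity theorem forces this representative to be holomorphic throughout, so $\zeta(k-2s)\ne0$ for $0<\Re(s)<\tfrac{k}{2}-\tfrac14$. Setting $w=k-2s$, this says $\zeta(w)\ne0$ for $\tfrac12<\Re(w)<k$, in particular on $\tfrac12<\Re(w)<1$; the functional equation then excludes any zero with $0<\Re(w)<\tfrac12$ as well, so every non-trivial zero has $\Re(\rho)=\tfrac12$.

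\emph{Main obstacle and a caveat.} The analytic heart of the argument is the forward contour shift: justifying convergence of the displaced vertical integral and the vanishing of the horizontal truncations, which rests on the conditional bound for $1/\zeta$ immediately to the right of the critical line together with the decay of $\Gamma$. A separate delicate point occurs when $k$ is close to $1$ (the Hardy--Littlewood case \eqref{Hardy-Littlewood}), where the strip $0<\Re(s)<\tfrac{k-1}{2}$ used to establish the Mellin identity shrinks to nothing and the behaviour of $P_k$ near the origin must be analysed directly; here one falls back on the functional-equation form \eqref{Hardy-Littlewood} to identify $\widetilde{P_k}$ with $\Gamma(s)/\zeta(k-2s)$ in the required range before running the same pole/analyticity dichotomy.
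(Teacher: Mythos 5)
Your reverse direction is, up to the change of variable $s\mapsto -s$ and splitting the Mellin integral at $1$ instead of at a large parameter $N$, exactly the paper's argument: both extend the identity of Lemma \ref{Mellin_P_k(x)} analytically using the assumed decay of $P_k$, observe that the Gamma factor never vanishes, conclude that $\zeta(w)\neq 0$ for $\frac12<\Re(w)<1$, and finish with the functional equation. Your forward direction, however, is genuinely different. The paper's proof is elementary: assuming RH it invokes Littlewood's estimate \eqref{summatory_Mobius} for the Mertens function, deduces $M(\ell;n)=O_\epsilon(\ell^{\frac12-k+\epsilon})$ by partial summation, and splits $P_k(x^2)$ at $\ell=[x^{1-\epsilon}]+1$; no contour integration and no bounds on $1/\zeta$ inside the critical strip are needed. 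You instead represent $P_k(x)$ as a Mellin--Barnes integral and shift the contour to $\Re(s)=\frac{k}{2}-\frac14-\epsilon$, which under RH crosses no poles, controlling the shifted vertical integral and the horizontal segments by Stirling's formula together with the conditional estimate $1/\zeta(\sigma+i\tau)\ll_{\delta,\eta}|\tau|^{\eta}$ for $\sigma\geq\frac12+\delta$ (Titchmarsh, Ch.~XIV). This is sound, and the two conditional inputs are of comparable depth (Littlewood's bound is classically derived from that very $1/\zeta$ estimate); your route is closer in spirit to the proof of Theorem \ref{first_generalization} and yields the bound directly from an absolutely convergent integral. What the paper's route buys is uniformity in $k$: your working strip $0<\Re(s)<\frac{k-1}{2}$ is empty at $k=1$ --- precisely the Hardy--Littlewood case \eqref{Riesz type_Hardy_Littlewood} that the theorem is meant to recover --- and your fallback there is only sketched; it can be repaired (start from $c\in(-1,0)$ as in \eqref{main equation}, use $\sum_{n=1}^{\infty}\mu(n)/n=0$ and the regularity of $\Gamma(s)/\zeta(1-2s)$ at $s=0$ to push the line to $c>0$, noting that under RH no poles lie in $0<\Re(s)<\frac14$), but the paper's elementary argument handles all $k\geq1$ at once without any such extra step.
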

In particular,  letting $k=1$ in \eqref{AGM_bound} allows us to recover the bound \eqref{Riesz type_Hardy_Littlewood} of Hardy and Littlewood,  and $k=2$ gives the bound \eqref{Riesz} of Riesz.

\section{Preliminaries}

In this section,  we state a few well-known results,  which will play a vital role in proving our main identity.  We begin with an important summation formula, namely, Euler's summation formula.
 \begin{lemma}
 Let $\{ a_n\}$ be a sequence of complex number and $f(t)$ be a continuously differentiable function on $[1,x]$.  Consider $A(x):= \sum_{1 \leq n \leq x} a_n$.  Then we have
 \begin{align}\label{Euler's summation}
 \sum_{ 1\leq n \leq x} a_n f(n) = A(x) f(x) - \int_{1}^{x} A(t) f'(t) {\rm d}t.
 \end{align}
 \end{lemma}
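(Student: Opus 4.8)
The plan is to prove the identity \eqref{Euler's summation} directly, by exploiting the fact that the partial-sum function $A(t) = \sum_{1 \le m \le t} a_m$ is a step function, so that the integral on the right-hand side collapses into a finite sum that can be matched against the left-hand side. Write $N = \lfloor x \rfloor$ and note that $A(t)$ is constant on each half-open interval $[n, n+1)$, where it takes the value $A(n)$; in particular $A(t) = A(N)$ throughout the final stretch $[N, x]$. The hypothesis that $f$ is continuously differentiable on $[1,x]$ is exactly what is needed to make the integrations below legitimate.

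First I would split the integral at the points of discontinuity of $A$,
\begin{equation*}
\int_{1}^{x} A(t) f'(t)\, {\rm d}t = \sum_{n=1}^{N-1} A(n) \int_{n}^{n+1} f'(t)\, {\rm d}t + A(N) \int_{N}^{x} f'(t)\, {\rm d}t,
\end{equation*}
and evaluate each inner integral by the fundamental theorem of calculus, turning the right-hand side into
\begin{equation*}
\sum_{n=1}^{N-1} A(n)\big( f(n+1) - f(n) \big) + A(N)\big( f(x) - f(N) \big).
\end{equation*}

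Next I would perform an Abel-type reindexing of the telescoping sum. Shifting the index in the $A(n) f(n+1)$ terms and regrouping, and then invoking the defining relation $a_n = A(n) - A(n-1)$ (with the convention $A(0)=0$) to collapse the consecutive differences, converts the first sum into $A(N-1) f(N) - \sum_{n=1}^{N-1} a_n f(n)$. Adding the boundary contribution $A(N)\big( f(x) - f(N) \big)$ and using $A(N-1) - A(N) = -a_N$ then gives
\begin{equation*}
\int_{1}^{x} A(t) f'(t)\, {\rm d}t = A(N) f(x) - \sum_{n=1}^{N} a_n f(n).
\end{equation*}
Since $A(N) = A(x)$ and $\sum_{n=1}^{N} a_n f(n) = \sum_{1 \le n \le x} a_n f(n)$, rearranging yields precisely \eqref{Euler's summation}.

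There is no real analytic obstacle here; the entire content is careful bookkeeping. The point that needs the most attention is the handling of the final partial interval $[N, x]$ together with the two extreme indices $n=1$ and $n=N$, so that the index shift telescopes cleanly regardless of whether $x$ is an integer (when $x \in [1,2)$ the first sum is empty and the convention $A(0)=0$ still keeps everything consistent). Equivalently, the whole computation can be packaged as a Riemann--Stieltjes integration by parts, $\sum_{1 \le n \le x} a_n f(n) = \int_{1^{-}}^{x} f\, {\rm d}A$, but the elementary step-function argument above is fully self-contained and avoids any appeal to Stieltjes integration.
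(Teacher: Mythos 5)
Your proof is correct, but there is nothing in the paper to compare it against: the paper does not prove this lemma at all, it simply cites Murty's \emph{Problems in Analytic Number Theory} (p.~17). Your argument is the standard and complete one — split $\int_{1}^{x} A(t) f'(t)\,{\rm d}t$ at the integers where the step function $A$ jumps, evaluate each piece by the fundamental theorem of calculus (this is exactly where the hypothesis that $f$ is continuously differentiable enters), then telescope via $a_n = A(n) - A(n-1)$ with $A(0)=0$. The bookkeeping checks out: with $N = \lfloor x \rfloor$, the reindexed sum gives $A(N-1)f(N) - \sum_{n=1}^{N-1} a_n f(n)$ (using $A(1)f(1) = a_1 f(1)$ to absorb the bottom term), the boundary piece $A(N)\bigl(f(x) - f(N)\bigr)$ combines with it through $A(N-1) - A(N) = -a_N$, and $A(N) = A(x)$ closes the identity; the degenerate case $x \in [1,2)$, where the sum over $n \le N-1$ is empty, is handled consistently by the same conventions. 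This is essentially the proof one finds in the cited reference, so your proposal serves as a self-contained substitute for the paper's appeal to the literature; the Riemann--Stieltjes packaging you mention at the end is an equivalent formulation, and your choice to avoid it keeps the argument at the elementary level the lemma deserves.
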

\begin{proof}
Proof of this result can be found in \cite[p.~17]{Murty}
\end{proof}

The next result gives an important information about the asymptotic expansion of the gamma function. 
\begin{lemma}\label{Stirling}
In a vertical strip $\alpha \leq \sigma \leq \beta$,  we have
\begin{equation}\label{Stirling_equn}
|\Gamma (\sigma + i T)| = \sqrt{2\pi} | T|^{\sigma - 1/2} e^{-\frac{1}{2} \pi |T|} \left(1 + O\left(\frac{1}{|T|}\right)  \right) \quad {\rm as} \quad |T|\rightarrow \infty.
\end{equation}
 \end{lemma}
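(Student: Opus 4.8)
The plan is to obtain the modulus asymptotic from the classical Stirling expansion for the logarithm of the gamma function,
\begin{equation*}
\log\Gamma(z) = \left(z - \tfrac{1}{2}\right)\log z - z + \tfrac{1}{2}\log(2\pi) + O\left(\frac{1}{|z|}\right),
\end{equation*}
which holds uniformly as $|z|\to\infty$ in any fixed sector $|\arg z|\le \pi - \delta$ with $\delta>0$. Since $\Gamma(\overline{z}) = \overline{\Gamma(z)}$, we have $|\Gamma(\sigma + iT)| = |\Gamma(\sigma - iT)|$, so it is enough to prove the estimate for $T>0$ and then replace $T$ by $|T|$; this accounts for the absolute values appearing in \eqref{Stirling_equn}. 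For $\sigma$ confined to the compact interval $[\alpha,\beta]$ and $T\to+\infty$, the point $z = \sigma + iT$ remains well inside a sector of the above type, so the expansion applies uniformly in $\sigma$.

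First I would set $z = \sigma + iT$ and take real parts, using $\log|\Gamma(z)| = \Re\log\Gamma(z)$. Writing $\log z = \tfrac{1}{2}\log(\sigma^2+T^2) + i\arg z$, a short computation gives
\begin{equation*}
\Re\left[\left(z - \tfrac{1}{2}\right)\log z\right] = \left(\sigma - \tfrac{1}{2}\right)\tfrac{1}{2}\log(\sigma^2 + T^2) - T\arg z, \qquad \Re(-z) = -\sigma.
\end{equation*}
Next I would expand the two transcendental pieces for large $T>0$, uniformly for $\sigma\in[\alpha,\beta]$:
\begin{equation*}
\tfrac{1}{2}\log(\sigma^2 + T^2) = \log T + O\left(\frac{1}{T^2}\right), \qquad \arg(\sigma + iT) = \frac{\pi}{2} - \arctan\left(\frac{\sigma}{T}\right) = \frac{\pi}{2} - \frac{\sigma}{T} + O\left(\frac{1}{T^3}\right).
\end{equation*}

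The decisive step is to combine these and spot a cancellation. Multiplying out the second expansion gives $T\arg z = \tfrac{\pi}{2}T - \sigma + O(1/T^2)$, so the term $+\sigma$ produced by $-T\arg z$ exactly cancels the $-\sigma$ coming from $\Re(-z)$. Collecting everything,
\begin{equation*}
\log|\Gamma(\sigma + iT)| = \left(\sigma - \tfrac{1}{2}\right)\log T - \frac{\pi}{2}T + \tfrac{1}{2}\log(2\pi) + O\left(\frac{1}{T}\right),
\end{equation*}
and exponentiating, together with $e^{O(1/T)} = 1 + O(1/T)$, yields $|\Gamma(\sigma+iT)| = \sqrt{2\pi}\,T^{\sigma - 1/2} e^{-\pi T/2}\left(1 + O(1/T)\right)$; replacing $T$ by $|T|$ via the conjugation symmetry gives \eqref{Stirling_equn}. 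The main point needing care, and where the genuine bookkeeping lies, is ensuring that every error term is uniform in $\sigma$ on $[\alpha,\beta]$, and verifying that the exponentially small factor $e^{-\pi|T|/2}$ comes solely from the leading $\tfrac{\pi}{2}|T|$ in $T\arg z$, with the sub-leading $\sigma$ cancelling cleanly so that no spurious factor $e^{\pm\sigma}$ survives.
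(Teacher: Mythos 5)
Your derivation is correct: taking real parts of the logarithmic Stirling expansion $\log\Gamma(z)=\left(z-\tfrac12\right)\log z - z + \tfrac12\log(2\pi)+O(1/|z|)$, expanding $\arg(\sigma+iT)=\tfrac{\pi}{2}-\tfrac{\sigma}{T}+O(1/T^{3})$ so that the $+\sigma$ from $-T\arg z$ cancels $\Re(-z)=-\sigma$, and invoking $\Gamma(\overline{z})=\overline{\Gamma(z)}$ to pass from $T>0$ to $|T|$, is exactly the classical argument, and your uniformity bookkeeping for $\sigma\in[\alpha,\beta]$ is sound. The paper itself offers no proof of this lemma (it simply records it as the well-known vertical-strip form of Stirling's formula), so there is no divergence of method to report --- your write-up supplies the standard derivation that the paper implicitly relies on.
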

 This is popularly known as Stirling's formula.  
Now we will state an important result, which will provide us a bound for $1/\zeta(s)$. 
\begin{lemma}\label{bound_1by_zeta(s)}
Suppose, for every  non-trivial zero $\rho$ of $\zeta(s)$,  there exist a sequence of arbitary larger positive numbers $T$ with $|T-\Im(\rho)| >
 \exp\left(-\frac{C_0 \Im(\rho) }{\log(\Im(\rho))}\right)$, where $A$ is  some positive constant. Then,
 \begin{equation*}
  \frac{1}{|\zeta(\sigma + i T)|} < e^{C_1 T},
 \end{equation*}
 for some constant $0<C_1 < \pi/4$.
 \end{lemma}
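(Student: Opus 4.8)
The plan is to obtain the upper bound on $1/|\zeta(\sigma+iT)|$ by bounding $\log|\zeta(\sigma+iT)|$ from below, integrating the logarithmic derivative $\zeta'/\zeta$ horizontally from the region of absolute convergence at $\sigma=2$ leftwards into the critical strip, and then using the separation hypothesis to control the contribution of the zeros that are close to the ordinate $T$. Throughout I would fix $\sigma$ in a bounded vertical strip (say $-1\le\sigma\le 2$) and let $\gamma=\Im(\rho)$.

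First I would record two standard inputs (see Titchmarsh \cite{Tit}): the partial-fraction estimate for the logarithmic derivative,
\begin{equation*}
\frac{\zeta'}{\zeta}(s)=\sum_{\substack{\rho\\ |\Im(\rho)-T|\le 1}}\frac{1}{s-\rho}+O(\log T),\qquad s=\sigma+iT,\ -1\le\sigma\le 2,
\end{equation*}
and the density estimate that the number of zeros $\rho$ with $|\Im(\rho)-T|\le 1$ is $O(\log T)$. Combined with the elementary lower bound $|\zeta(2+iT)|\ge 1/\zeta(2)>0$, which follows from $1/\zeta(s)=\sum_n\mu(n)n^{-s}$ for $\Re(s)>1$, these are the only ingredients beyond routine manipulation. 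I would then write
\begin{equation*}
\log|\zeta(\sigma+iT)|=\log|\zeta(2+iT)|-\int_\sigma^2\Re\frac{\zeta'}{\zeta}(u+iT)\,\mathrm{d}u,
\end{equation*}
substitute the partial-fraction bound, and integrate each term: since $\int_\sigma^2\Re\frac{1}{u+iT-\rho}\,\mathrm{d}u=\log\frac{|2+iT-\rho|}{|\sigma+iT-\rho|}$, whose numerator is $O(1)$ and whose denominator is at least $|T-\Im(\rho)|$, the integral is bounded above by
\begin{equation*}
\int_\sigma^2\Re\frac{\zeta'}{\zeta}(u+iT)\,\mathrm{d}u\le\sum_{\substack{\rho\\ |\Im(\rho)-T|\le 1}}\left(\log\frac{1}{|T-\Im(\rho)|}+O(1)\right)+O(\log T).
\end{equation*}

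The core of the argument is to estimate this last sum via the gap hypothesis. For each zero in the window the assumption $|T-\Im(\rho)|>\exp\!\left(-C_0\Im(\rho)/\log\Im(\rho)\right)$ forces $\log\frac{1}{|T-\Im(\rho)|}<C_0\,\Im(\rho)/\log\Im(\rho)=C_0 T/\log T\,(1+o(1))$, and since there are only $O(\log T)$ such zeros the whole sum is $O(C_0 T)$. Feeding this back, and absorbing the bounded term $\log|\zeta(2+iT)|$ and the $O(\log T)$ remainder, I would conclude
\begin{equation*}
\log\frac{1}{|\zeta(\sigma+iT)|}\le C_2\,T,
\end{equation*}
uniformly in $\sigma$, where the constant $C_2$ is proportional to $C_0$ (namely $C_2=A_1 C_0(1+o(1))$ with $A_1$ the implied constant in the density estimate).

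The main obstacle — and the reason the hypothesis takes its peculiar exponential shape — is precisely this sum over nearby zeros: a single zero whose ordinate lies within the exponentially small gap of $T$ already contributes order $C_0 T/\log T$, and the $O(\log T)$ zeros crowded into the unit window raise the total to order $C_0 T$, so that integrating $\zeta'/\zeta$ can produce no better than a linear-in-$T$ bound on $\log(1/|\zeta|)$. Because $C_2$ is proportional to $C_0$, I would finish by taking $C_0$ small enough that $C_2<\pi/4$, giving the claimed bound with some $0<C_1<\pi/4$; and for every such $C_0$ an admissible sequence $T\to\infty$ genuinely exists, since the total measure of ordinates excluded in $[T_0,2T_0]$ is at most $N(2T_0)\exp\!\left(-C_0 T_0/\log T_0\right)\to 0$.
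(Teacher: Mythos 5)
Your proof is correct and is, in substance, the very argument the paper relies on: the paper gives no proof of its own, merely citing Titchmarsh \cite[p.~219]{Tit}, and the standard proof found there is exactly your route --- the partial-fraction expansion of $\zeta'/\zeta$ over the zeros with $|\Im(\rho)-T|\le 1$, the $O(\log T)$ count of such zeros, horizontal integration from $\sigma=2$ (where $1/|\zeta(2+iT)|\le\zeta(2)$), and the gap hypothesis to bound each term $\log(1/|T-\Im(\rho)|)$ by $C_0T/\log T$. You also correctly handle the one point the paper's statement garbles: the constant you obtain is proportional to $C_0$, so the conclusion $0<C_1<\pi/4$ only follows after $C_0$ is taken sufficiently small, and your measure-theoretic check that admissible $T\to\infty$ exist for every fixed $C_0>0$ is the justification the paper instead delegates to the bracketing condition \eqref{bracketing}.
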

 \begin{proof}
 One can find a proof of this result in \cite[p.~219]{Tit}.  An important point to note that  the assumption of the bracketing condition \eqref{bracketing} guarantee the existence of such a sequence. 
 \end{proof}
 The next result will be one of the crucial ingredients in providing an equivalence criteria for the Riemann hypothesis.
 \begin{lemma}\label{Mellin_P_k(x)}
  Let $k \geq 1$. 
In the region $\frac{1-k}{2} <\Re(s) < 1$, except at $s=0$,  we have
\begin{align*}
\int_{0}^{\infty} x^{-s-1} P_k(x) {\rm d}x = \frac{\Gamma(-s)}{\zeta(2s+k)}.
\end{align*}
\end{lemma}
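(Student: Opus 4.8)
The plan is to treat the stated identity as a Mellin transform evaluation followed by analytic continuation. Throughout write $\sigma=\Re(s)$. First I would work in the narrower strip $\frac{1-k}{2}<\sigma<0$, where the integral converges absolutely. For $\sigma<0$ one has $\int_0^\infty x^{-s-1}e^{-x/n^2}\,\mathrm{d}x=\Gamma(-s)\,n^{-2s}$, and by Tonelli's theorem
\begin{equation*}
\sum_{n=1}^{\infty}\frac{1}{n^k}\int_0^\infty x^{-\sigma-1}e^{-x/n^2}\,\mathrm{d}x=\Gamma(-\sigma)\sum_{n=1}^{\infty}\frac{1}{n^{2\sigma+k}}=\Gamma(-\sigma)\,\zeta(2\sigma+k),
\end{equation*}
which is finite precisely when $\frac{1-k}{2}<\sigma<0$. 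This legitimizes interchanging summation and integration, so that in this strip
\begin{equation*}
\int_0^\infty x^{-s-1}P_k(x)\,\mathrm{d}x=\Gamma(-s)\sum_{n=1}^{\infty}\frac{\mu(n)}{n^{2s+k}}=\frac{\Gamma(-s)}{\zeta(2s+k)},
\end{equation*}
using $\sum_{n\ge1}\mu(n)n^{-z}=1/\zeta(z)$ for $\Re(z)>1$.

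The remaining task is to extend this equality to the full region $\frac{1-k}{2}<\sigma<1$, $s\neq0$. The right-hand side is already analytic there: the factor $\zeta(2s+k)$ is nonvanishing since $\Re(2s+k)>1$, while $\Gamma(-s)$ contributes its only pole in the region at $s=0$. To continue the left-hand side I would record the behaviour of $P_k(x)$ at the two ends of the range of integration. As $x\to0^+$, set $c_k=\sum_{n\ge1}\mu(n)/n^k$, which equals $1/\zeta(k)$ for $k>1$ and equals $0$ for $k=1$, the latter being equivalent to the prime number theorem. The elementary inequality $1-e^{-u}\le u$ for $u\ge0$ then yields
\begin{equation*}
\left|P_k(x)-c_k\right|=\left|\sum_{n=1}^{\infty}\frac{\mu(n)}{n^k}\left(e^{-x/n^2}-1\right)\right|\le x\sum_{n=1}^{\infty}\frac{1}{n^{k+2}}=x\,\zeta(k+2),
\end{equation*}
so $P_k(x)-c_k=O(x)$ near the origin. (For $k=1$ one first uses the conditional convergence of $\sum\mu(n)/n$ to rewrite $P_1$ as the absolutely convergent series displayed above.) As $x\to\infty$, the crude bound $|P_k(x)|\le\sum_{n\ge1}n^{-k}e^{-x/n^2}$ together with a comparison to an integral gives $P_k(x)=O\left(x^{(1-k)/2}\right)$ (with an extra logarithm when $k=1$).

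With these estimates I would split $\int_0^\infty=\int_0^1+\int_1^\infty$. The decay just noted shows that $\int_1^\infty x^{-s-1}P_k(x)\,\mathrm{d}x$ converges and is analytic for $\sigma>\frac{1-k}{2}$. For the lower piece I would write
\begin{equation*}
\int_0^1 x^{-s-1}P_k(x)\,\mathrm{d}x=\int_0^1 x^{-s-1}\left(P_k(x)-c_k\right)\mathrm{d}x-\frac{c_k}{s},
\end{equation*}
where the integral on the right converges and is analytic for $\sigma<1$ by the $O(x)$ bound, and the term $-c_k/s$ supplies the meromorphic continuation with at most a simple pole at $s=0$. Adding the three contributions produces an analytic continuation of $\int_0^\infty x^{-s-1}P_k(x)\,\mathrm{d}x$ to $\frac{1-k}{2}<\sigma<1$, $s\neq0$, that agrees with $\Gamma(-s)/\zeta(2s+k)$ on the overlap strip $\frac{1-k}{2}<\sigma<0$; since that region is connected, the identity theorem forces equality throughout, which is the claim.

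I expect the main obstacle to be the analysis at $x\to0^+$, and in particular recognizing that the excluded point $s=0$ is exactly the pole generated by the constant limit $c_k$ of $P_k(x)$. This is why, for $k>1$, the integral cannot converge literally once $\sigma\ge0$, so the identity must be understood through the analytic continuation constructed above. The delicate input is the $k=1$ case, where $c_1=0$ only because $\sum_{n\ge1}\mu(n)/n=0$, so that the prime number theorem is what removes the apparent pole and keeps $P_1(x)=O(x)$ near the origin.
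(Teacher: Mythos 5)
For $k>1$ your argument is correct, and it takes a genuinely different route from the paper's. The paper never computes the Mellin transform of $P_k$ termwise: it multiplies the unknown integral by $\zeta(2s+k)$, expands that factor as a Dirichlet series, and uses the M\"obius-type collapse $\sum_{n\geq 1} n^{-k}P_k(x/n^2)=e^{-x}$ (proved from the Taylor-series form $P_k(x)=\sum_{m\geq 0}\frac{(-1)^m x^m}{m!\,\zeta(k+2m)}$, with the interchange justified by the bound $|n^{-k}P_k(x/n^2)|\leq \zeta(k)n^{-k}$), so that everything reduces to $\int_0^\infty x^{-s-1}e^{-x}\,{\rm d}x=\Gamma(-s)$ for $\Re(s)<0$, followed by a one-line appeal to analytic continuation. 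Your route --- termwise integration justified by Tonelli in the strip $\frac{1-k}{2}<\sigma<0$, then an explicit continuation of the left-hand side by splitting $\int_0^1+\int_1^\infty$ and subtracting $c_k=1/\zeta(k)$ --- reaches the same identity more directly, and your handling of the continuation is in fact more careful than the paper's: you make explicit that for $k>1$ the integral diverges once $\sigma\geq 0$ (because $P_k(0^+)=1/\zeta(k)\neq 0$), that the identity there means equality of analytic continuations, and that the excluded point $s=0$ is precisely the pole coming from the term $-c_k/s$.

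The genuine defect is at $k=1$, which the statement includes. Your base strip $\frac{1-k}{2}<\sigma<0$ is empty when $k=1$, so the Tonelli computation establishes the identity on no open set at all, and there is then nothing to continue. The supporting estimate also fails there: the ``crude bound'' $|P_1(x)|\leq\sum_{n\geq1}n^{-1}e^{-x/n^2}$ is vacuous because that majorant series diverges (its terms are of size $1/n$ for $n\gg\sqrt{x}$); $P_1$ is only conditionally convergent, so no extra logarithm can rescue this step. The paper dodges the case by proving the lemma only for $k>1$ and citing Riesz for $k=1$. Your own parenthetical remark contains the correct repair: use the prime number theorem $\sum_{n\geq1}\mu(n)/n=0$ to write $P_1(x)=\sum_{n\geq1}\frac{\mu(n)}{n}\left(e^{-x/n^2}-1\right)$, which is absolutely convergent, and run Tonelli on this series instead, using
\begin{align*}
\int_0^\infty x^{-s-1}\left(e^{-x/n^2}-1\right){\rm d}x=\Gamma(-s)\,n^{-2s},\qquad 0<\Re(s)<1,
\end{align*}
where the subtracted $1$ regularizes the integral at $x=0$ and the evaluation follows by integration by parts; the majorant is then $|\Gamma(-\sigma)|\,\zeta(1+2\sigma)<\infty$ for $\sigma>0$, and the identity follows directly on the whole strip $0<\sigma<1$. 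As written, however, your proof covers only $k>1$.
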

\begin{proof}
This lemma was proved by Riesz \cite{Riesz} for $k=1$,  which was used by Hardy and Littlewood \cite[p.~161,  Equn (2.544)]{HL-1916} to give an equivalent criteria for the Riemann hypothesis. Here we shall prove this lemma for $k >1$.    Using the definition of $P_k(x)$,  one can show that
\begin{align}\label{another form_P_k(x)}
P_{k}(x) = \sum_{n=1}^{\infty} \frac{\mu(n)}{n^k} \exp\left({-\frac{x}{n^2}}\right)  = \sum_{n=1}^{\infty} \frac{\mu(n)}{n^k} \sum_{m=0}^\infty \frac{(-1)^m x^m }{m! n^{2m} } 
 = \sum_{m=0}^\infty \frac{(-1)^m x^m }{m! \zeta(k+2m) } .
\end{align}
Note that, when $k=1$, the last sum will start from $m=1$ since the term corresponding to $m=0$ will become zero. 
For $\Re(s) > \frac{1-k}{2}$,  use the series expansion of $\zeta(2s +k)$ to write
\begin{align}\label{Mellin transform of P_k(x)}
\zeta(2s+k) \int_{0}^{\infty} x^{-s-1} P_k(x) {\rm d}x  & = \sum_{n=1}^{\infty} \frac{1}{n^k} \int_{0}^{\infty}  \frac{x^{-s-1}}{n^{2s}} P_k(x) {\rm d}x  \nonumber \\
& = \sum_{n=1}^{\infty} \frac{1}{n^k}  \int_{0}^{\infty}  x^{-s-1} P_k\left(\frac{x}{n^2} \right) {\rm d}x \nonumber \\
& =  \int_{0}^{\infty}  x^{-s-1} \sum_{n=1}^{\infty} \frac{1}{n^k} P_k\left(\frac{x}{n^2} \right) {\rm d}x.
\end{align}
Here in the second last step we replaced $x$ by $x/n^2$ and in the final step interchanging of summation and integration was possible due to Weierstrass M-test as one can easily show that  $\left| \frac{1}{n^k} P_k\left(\frac{x}{n^2} \right) \right| \leq \frac{\zeta(k)}{n^k}$ for $k >1$.   Now using \eqref{another form_P_k(x)}, one can find that 
\begin{align}\label{exp(-x)}
\sum_{n=1}^{\infty} \frac{1}{n^k} P_k\left(\frac{x}{n^2} \right)  =  \sum_{n=1}^{\infty} \frac{1}{n^k}   \sum_{m=0}^\infty \frac{(-1)^m x^m }{m! n^{2m} \zeta(k+2m) } 
 =\sum_{m=0}^\infty \frac{(-1)^m x^m }{m! } = e^{-x}.
\end{align}
On substituting \eqref{exp(-x)} in \eqref{Mellin transform of P_k(x)}, we get
\begin{align*}
\zeta(2s+k) \int_{0}^{\infty} x^{-s-1} P_k(x) {\rm d}x =  \int_{0}^{\infty}  x^{-s-1} e^{-x} {\rm d} x  = \Gamma(-s),
\end{align*}
for $\Re(s)<0$.  By analytic continuation it can be extended to the right half plane  $\Re(s)<1$
except at $s=0$.

\end{proof}
Now we define an important special function,  namely,  the Meijer $G$-function \cite[p.~415, Definition 16.17]{NIST}, which is the generalization of many well-known special functions.
Let $m,n,p,q$ be non-negative integers with $0\leq m \leq q$, $0\leq n \leq p$. Let $a_1, \cdots, a_p$ and $b_1, \cdots, b_q$ be complex numbers with $a_i - b_j \not\in \mathbb{N}$ for $1 \leq i \leq n$ and $1 \leq j \leq m$. Then the Meijer $G$-function is defined by 
\begin{align}\label{Meijer-G}
G_{p,q}^{\,m,n} \!\left(  \,\begin{matrix} a_1,\cdots , a_p \\ b_1, \cdots , b_q \end{matrix} \; \Big| z   \right) := \frac{1}{2 \pi i} \int_L \frac{\prod_{j=1}^m \Gamma(b_j - s) \prod_{j=1}^n \Gamma(1 - a_j +s) z^s  } {\prod_{j=m+1}^q \Gamma(1 - b_j + s) \prod_{j=n+1}^p \Gamma(a_j - s)}\mathrm{d}s,
\end{align}
where the line of integration $L$, from $-i \infty$ to $+i \infty$,   separates the poles of the factors $\Gamma(1-a_j+s)$  from those of the factors $\Gamma(b_j-s)$.   The above integral converges if $p+q < 2(m+n)$ and $|\arg(z)| < \left(m+n - \frac{p+q}{2} \right) \pi$. 

Now we state Slater's theorem \cite[p.~415, Equation 16.17.2]{NIST}, which will allow us to express Meijer $G$-function in terms of generalized hypergeometric functions. 
If $p \leq q$ and $ b_j - b_k \not\in \mathbb{Z}$ for $j\neq k$, $1 \leq j, k \leq m$, then 
\begin{align}\label{Slater}
& G_{p,q}^{\,m,n} \!\left(   \,\begin{matrix} a_1, \cdots , a_p \\ b_1, \cdots , b_q \end{matrix} \; \Big| z   \right)  \\
& \quad = \sum_{k=1}^{m} A_{p,q,k}^{m,n}(z) {}_p F_{q-1} \left(  \begin{matrix}
1+b_k - a_1,\cdots, 1+ b_k - a_p \\
1+ b_k - b_1, \cdots, *, \cdots, 1 + b_k - b_q 
\end{matrix} \Big| (-1)^{p-m-n} z  \right), \nonumber 
\end{align}
where $*$ means that the entry $1 + b_k - b_k$ is omitted and 
\begin{align*}
A_{p,q,k}^{m,n}(z) := \frac{ z^{b_k}  \prod_{ j=1,  j\neq k}^{m} \Gamma(b_j - b_k ) \prod_{j=1}^n   \Gamma( 1 + b_k -a_j ) }{ \prod_{j=m+1}^{q} \Gamma(1 + b_k - b_{j}) \prod_{j=n+1}^{p} \Gamma(a_{j} - b_k)  }.
\end{align*}
Now we are ready to provide the proof of the main results.

\section{Proof of main results}

\begin{proof}[ Theorem {\rm \ref{first_generalization}}][]
It is well-known that the gamma function $\Gamma(s)$ is the Mellin transform of $e^{-x}$.  Therefore,  $e^{-x}$ is the inverse Mellin transform of $\Gamma(s)$.  That is,
\begin{align*}
e^{-x} = \frac{1}{2 \pi i}  \int_{c-i\infty}^{c+i \infty}\Gamma(s) x^{-s} {\rm d}s,
\end{align*}
valid for any $c>0$.  If we shift the line of integration to the line $c\in (-1,0)$,  then we will have
\begin{align}\label{inverse_Mellin}
e^{-x} - 1 = \frac{1}{2 \pi i}  \int_{c-i\infty}^{c+i \infty}\Gamma(s) x^{-s} {\rm d}s. 
\end{align}
First, we recall that the prime number theorem is equivalent to the fact that $\sum_{n=1}^{\infty} \frac{\mu(n)}{n}=0$.  One can easily show that,  for any $k \in \mathbb{N}$,  the series  $\sum_{n=1}^{\infty} \frac{\mu(n)}{n^k} \exp \left({-\frac{x}{n^2}}\right)$ is convergent.  In fact,  for $k \geq 2$,  it is absolutely convergent,  whereas for $k=1$, it is conditionally convergent.  However,  our proof is valid for any $k \geq 1$.
With the help of \eqref{inverse_Mellin},  we can write 
\begin{align}
     \sum_{n=1}^{\infty} \frac{\mu(n)}{n^k} \exp \left({-\frac{x}{n^2}}\right)&=\sum_{n=1}^{\infty} \frac{\mu(n)}{n^k}- \sum_{n=1}^{\infty} \frac{\mu(n)}{n^k} \left(1-\exp\left({-\frac{x}{n^2}}\right)\right) \nonumber \\
     &=\sum_{n=1}^{\infty} \frac{\mu(n)}{n^k}+\sum_{n=1}^{\infty} \frac{\mu(n)}{n^k}\frac{1}{2\pi i} \int_{c-i\infty}^{c+i \infty}\Gamma(s) \left(\frac{x}{n^2}\right)^{-s} {\rm d}s  \nonumber \\
     & =  \sum_{n=1}^{\infty} \frac{\mu(n)}{n^k}+\frac{1}{2\pi i}\int_{c-i \infty}^{c+i \infty}\frac{\Gamma(s)}{\zeta(k-2s)} x^{-s} {\rm d}s, \label{main equation}
\end{align}
here in the ultimate step we interchanged the summation and integration since the series $\sum_{n=1}^{\infty} \frac{\mu(n)}{n^{k-2s}}$ is absolutely absolutely and uniformly convergent in any compact subset of the domain $\Re(k-2s) >1$.  Now we shall try to simplify the line integral present in \eqref{main equation}.  Let us denote this line integral as
\begin{align}\label{main_line_integration}
I_{k}(x):= \frac{1}{2\pi i}\int_{c-i \infty}^{c+i \infty} \frac{\Gamma(s)}{\zeta(k-2s)} x^{-s} {\rm d}s. 
\end{align}
First, we observe that the integrand function  has simple poles at non-positive integers due to the simple poles of $\Gamma(s)$.  Again,  for any $m \in \mathbb{N}$,  $\frac{k}{2}+m$ is also a simple pole of the integrand function since $\zeta(k-2s)$ has trivial zeros at $-2m$.  Another important observation is that $\zeta(k-2s)$ has infinitely many non-trivial zeros in the critical strip $\frac{k}{2} -1 < \Re(s) < \frac{k}{2}$.  If one assumes the Riemann hypothesis,  then we can say all the non-trivial zeros of $\zeta(k-2s)$ will lie on the line $\Re(s) = \frac{k}{2}-\frac{1}{4}$.  However,  to obtain our main identity,  we do not require the assumption of the Riemann hypothesis.   Nevertheless, as we are interested to collect the contributions of the residual terms corresponding to the non-trivial zeros of $\zeta(k-2s)$,  so we have to shift the line of integration to the line  $\Re(s) = \lambda$, where $\lambda \in \left( \frac{k}{2},  \frac{k}{2}+1 \right)$. 

Thus,  we  consider the contour $\mathcal{C}$ determined by the line segments $[ \lambda-iT,  \lambda+iT], [\lambda+i T, c + i T], [c + i T,  c - i T]$, and $ [c - i T, \lambda-iT]$,  where $ -1<c<0$ and $\frac{k}{2} < \lambda < \frac{k}{2}+1$,  and $T$ is some large positive number.   Now appealing to Cauchy's residue theorem,  we have
\begin{align}\label{CRT_application}
    \frac{1}{2\pi i}\int_{\mathcal{C}}\frac{  \Gamma(s)  }{\zeta(k-2s) } x^{-s} {\rm d}s = R_{0} + \mathcal{R}_{T}(x),
\end{align}
where $R_0$ denotes the residual term due to the simple pole of $\Gamma(s)$ at $s=0$ and $\mathcal{R}_{T}(x)$ denotes the sum of the residual terms corresponding to the non trivial zeroes  $\rho$ of $\zeta(k-2s)$ with $|\Im(\rho)| < T$.  Next,  we shall try  to show that the horizontal integrals
\begin{align*}
H_1(x; T):= \frac{1}{2 \pi i } \int_{\lambda + i T}^{c+i T} \frac{  \Gamma(s)  }{\zeta(k-2s) } x^{-s} {\rm d}s, \quad {\rm and} \quad
H_2(x; T):= \frac{1}{2 \pi i } \int_{c - i T}^{ \lambda - i T} \frac{  \Gamma(s)  }{\zeta(k-2s) } x^{-s} {\rm d}s  
\end{align*}
 vanish as $T$ approaches to infinity.  Here,  we employ Starling's formula for $\Gamma(s)$ i.e.,  Lemma \ref{Stirling_equn} and  along with Lemma \ref{bound_1by_zeta(s)},  we find that
 \begin{align*}
 |H_1(x; T)| \ll \int_{\lambda}^{c} |T|^C \exp\left( 2 C_1 T- \frac{\pi}{2} |T| \right) {\rm d}T,
 \end{align*}
where $C_1$ is some constant, and $0< C_1 <\pi/4$. This immediately implies that $H_1(x; T)$ vanishes as $T \rightarrow \infty$.  In a similar way,  one can show that $H_2(x;T)$ goes to zero as $T \rightarrow \infty$.  Now letting $T\rightarrow \infty$ in \eqref{CRT_application},  we arrive at
\begin{align}\label{two vertical integral}
\frac{1}{2 \pi i } \int_{c - i T}^{c+i T} \frac{  \Gamma(s)  }{\zeta(k-2s) } x^{-s} {\rm d}s= \frac{1}{2 \pi i } \int_{\lambda - i T}^{\lambda+i T} \frac{  \Gamma(s)  }{\zeta(k-2s) } x^{-s} {\rm d}s - R_0 - \mathcal{R}(x),
\end{align}
where $\mathcal{R}(x)$ is an infinite series,  that is,  the sum of the residual terms corresponding to the non-trivial zeros of $\zeta(k-2s)$. 
One can easily calculate the residue at $s=0$,  
\begin{align}\label{Residue_s=0}
R_{0} = \lim_{s \rightarrow 0}  \frac{  s \Gamma(s)  }{\zeta(k-2s) } x^{-s}= \frac{1}{\zeta(k)}. 
\end{align}
If we assume that all the non-trivial zeros of $\zeta(s)$ are simple,  then we get
\begin{align}\label{infinite_residual term}
\mathcal{R}(x)= \sum_{ \rho } \lim_{ s \rightarrow \frac{k-\rho}{2}} \frac{ \left( s-  \frac{k-\rho}{2} \right) \Gamma(s)   }{\zeta(k-2s)  } x^{-s} = -\frac{1}{2}  \sum_{ \rho } \frac{\Gamma(\frac{k-\rho}{2})}{\zeta'(\rho)} x^{-\frac{k-\rho}{2}},
\end{align}
where the sum runs through all the non-trivial zeros $\zeta(s)$.  More generally,  if we do not assume the simplicity hypothesis,  that is, if $n_\rho$ is the multiplicity of $\rho$, then
\begin{align*}
\mathcal{R}(x)= \sum_{\rho} \frac{1}{(n_\rho - 1)!} 
&   \lim_{s\rightarrow  \frac{k-\rho}{2}  } \frac{ {\rm d}^{n_\rho -1}}{{\rm d}s^{n_\rho -1 } } \Bigg\{ \left(s-  \frac{k-\rho}{2}\right)^{n_\rho} \frac{\Gamma(s) }{\zeta(k-2s)x^s} \Bigg\}.
\end{align*}
Substituting \eqref{Residue_s=0} and \eqref{infinite_residual term} in \eqref{two vertical integral} and combining with \eqref{main equation} and \eqref{main_line_integration},  we obtain
\begin{align}\label{final series_with residual}
 \sum_{n=1}^{\infty} \frac{\mu(n)}{n^k} \exp \left({-\frac{x}{n^2}}\right) =  \frac{1}{2 \pi i } \int_{\lambda - i \infty}^{\lambda+i \infty} \frac{  \Gamma(s)  }{\zeta(k-2s) } x^{-s} {\rm d}s  + \frac{1}{2}  \sum_{ \rho } \frac{\Gamma(\frac{k-\rho}{2})}{\zeta'(\rho)} x^{-\frac{k-\rho}{2}}.
\end{align}
At this juncture,  we shall try to simplify the right vertical integral 
\begin{align}\label{right vertical integral}
V(x, k):=   \frac{1}{2 \pi i } \int_{\lambda - i \infty}^{\lambda+i \infty} \frac{  \Gamma(s)  }{\zeta(k-2s) } x^{-s} {\rm d}s,
\end{align}
where $\frac{k}{2} < \lambda < \frac{k}{2}+1$.   Here we make use of the symmetric form of the functional equation of $\zeta(s)$, that is,
\begin{equation*}
\pi^{-\frac{s}{2}}\Gamma\left(\frac{s}{2}\right)\zeta(s)=\pi^{-\frac{1-s}{2}}\Gamma\left(\frac{1-s}{2}\right)\zeta(1-s),
\end{equation*}
replace $s$ by $k-2s$ and simplify to see
\begin{align}\label{Zeta (k-2s)}
    \frac{1}{\zeta(k-2s)} = \frac{\pi^{\frac{1}{2}-k+2s} \Gamma\left(\frac{k-2s}{2} \right)}{\Gamma\left(\frac{1-k+2s}{2} \right) \zeta(1-k+2s)}.
\end{align}
Plugging \eqref{Zeta (k-2s)} in \eqref{right vertical integral},  one can see that
\begin{align}\label{second form_V(x,k)}
V(x, k):=   \frac{\pi^{ \frac{1}{2}-k}}{2 \pi i } \int_{\lambda - i \infty}^{\lambda+i \infty}  \frac{ \Gamma(s) \Gamma\left(\frac{k}{2}-s \right)}{\Gamma\left(\frac{1-k}{2} +s \right) \zeta(1-k+2s)} \left( \frac{x}{\pi^2}  \right)^{-s} {\rm d}s.
\end{align}
Here we can check that $1<\Re(1-k+2s)<3 $ since $\frac{k}{2} < \Re(s)= \lambda < \frac{k}{2}+1$.  Thus,  we can write 
\begin{align*}
\frac{1}{\zeta(1-k+2s)} = \sum_{n=1}^{\infty} \frac{\mu(n) n^{k-1}}{n^{2s}}.
\end{align*}
Making use of the above series expansion in \eqref{second form_V(x,k)} and then interchanging the summation and integration,  \eqref{second form_V(x,k)} reduces to 
\begin{align}\label{thrid form_V(x,k)}
V(x, k):=   \pi^{ \frac{1}{2}-k} \sum_{n=1}^\infty \mu(n) n^{k-1}  I(X_n, k),
\end{align}
where 
\begin{align}\label{I(X_n, k)}
I(X_n, k):=  \frac{1}{2 \pi i } \int_{\lambda - i \infty}^{\lambda+i \infty}   \frac{ \Gamma(s) \Gamma\left(\frac{k}{2}-s \right)}{\Gamma\left(\frac{1-k}{2} +s \right) }  X_n^{-s} {\rm d}s,
\end{align}
and $X_n = \frac{n^2 x}{\pi^2}$. 
Now our main goal is the simplify the integral $I(X_n, k)$.  We shall try to express this integral in terms of the Meijer $G$-function.  We know that $\Gamma(s)$ has poles at $s=0,-1,-2, \cdots$,  and the poles of $\Gamma\left( \frac{k}{2} -s  \right)$ are at $\frac{k}{2},  \frac{k}{2}-1,  \frac{k}{2} -2,  \cdots$.  Thus, we can easily observe that the line of integration $\Re(s)=\lambda$ 
does not separate the poles of $\Gamma(s)$ with the poles of $\Gamma\left( \frac{k}{2} -s  \right)$
since $\frac{k}{2} < \lambda < \frac{k}{2}+1$.  Therefore,  we shift the line of integration $\Re(s)= \lambda$ to a new line $\Re(s)= \lambda_1$,  where $0< \lambda_1 < \frac{k}{2}$,  so that this new line of integration does separate the poles $\Gamma(s)$ from the poles of $\Gamma\left(\frac{k}{2} -s  \right)$.  With the help of this new line of integration,  we construct a contour $\mathcal{C}_1$ determined by line segments $[\lambda-iT,\lambda+iT],  [\lambda+i T,  \lambda_1+iT], [\lambda_1+iT, \lambda_1-iT]$,  and $[\lambda_1- iT,  \lambda-iT]$.  Again,  utilizing the residue theorem,  we have
\begin{align}\label{2nd_application_CRT}
\frac{1}{2 \pi i}   \int_{\lambda-iT}^{\lambda+iT} + \int_{\lambda+i T}^{\lambda_1 + i T} + \int_{\lambda_1 + i T}^{\lambda_1 - i T} + \int_{\lambda_1 - i T}^{\lambda - i T}  \frac{ \Gamma(s) \Gamma\left(\frac{k}{2}-s \right)}{\Gamma\left(\frac{1-k}{2} +s \right) }  X_n^{-s} {\rm d}s = \mathop{\rm{Res}}_{s=\frac{k}{2}}  \frac{ \Gamma(s) \Gamma\left(\frac{k}{2}-s \right)}{\Gamma\left(\frac{1-k}{2} +s \right) }  X_n^{-s}.
\end{align}
Letting $T \rightarrow \infty$ and using Stirling's formula \eqref{Stirling_equn} for $\Gamma(s)$,  one can show that both of the horizontal integrals vanish.  Therefore,  \eqref{2nd_application_CRT} becomes
\begin{align}\label{another_right vertical}
\frac{1}{2 \pi i}   \int_{\lambda-i\infty}^{\lambda+i \infty}  \frac{ \Gamma(s) \Gamma\left(\frac{k}{2}-s \right)}{\Gamma\left(\frac{1-k}{2} +s \right) }  X_n^{-s} {\rm d}s =  \frac{1}{2 \pi i}   \int_{\lambda_1 - i \infty}^{\lambda_1 - i \infty}    \frac{ \Gamma(s) \Gamma\left(\frac{k}{2}-s \right)}{\Gamma\left(\frac{1-k}{2} +s \right) }  X_n^{-s} {\rm d}s -\frac{\Gamma(\frac{k}{2})X_n^{-\frac{k}{2}}}{\sqrt{\pi}}.
\end{align}
We have already seen that the line of integration $\Re(s)= \lambda_1$ does separate the poles of $\Gamma(s)$ from the poles of $\Gamma\left(\frac{k}{2}-s \right)$.  Thus,  making use of the definition \eqref{Meijer-G} of the Meijer $G$-function,  with $m=n=p=1,  q=2$,  and $a_1=1,  b_1= \frac{k}{2},  b_2= \frac{1+k}{2}$,  we can write
\begin{align}\label{in terms_Meijer-G}
\frac{1}{2 \pi i}   \int_{\lambda_1 - i \infty}^{\lambda_1 - i \infty}    \frac{ \Gamma(s) \Gamma\left(\frac{k}{2}-s \right)}{\Gamma\left(\frac{1-k}{2} +s \right) }  X_n^{-s} {\rm d}s =  G_{1,2}^{1,1} \left(\begin{matrix} 1 \\ \frac{k}{2},\frac{k+1}{2} \end{matrix} \Big| \frac{1}{X_n}\right).
\end{align}
Here one can easily check all the necessary conditions for the convergence of this Meijer $G$-function.  Now invoking Slater's theorem \eqref{Slater},  one can show that
\begin{align}\label{MeijerG_1F1}
G_{1,2}^{1,1} \left(\begin{matrix} 1 \\ \frac{k}{2},\frac{k+1}{2} \end{matrix} \Big| \frac{1}{X_n}\right) = \frac{\Gamma\left( \frac{k}{2} \right)}{X_n^{ \frac{k}{2}} \sqrt{\pi}} {}_1F_{1} \left( \frac{k}{2}; \frac{1}{2}; - \frac{1}{X_n} \right).
\end{align}
Substituting \eqref{MeijerG_1F1} in \eqref{in terms_Meijer-G} and together with \eqref{another_right vertical} and \eqref{I(X_n, k)},  we see that the right vertical integral $V(x,k)$ \eqref{thrid form_V(x,k)} takes the final shape as 
\begin{align*}
V(x, k) =  \frac{\Gamma\left( \frac{k}{2} \right)}{\pi^k}   \sum_{n=1}^\infty \frac{ \mu(n) n^{k-1} }{X_n^{ \frac{k}{2}} }  \left\{  {}_1F_{1} \left( \frac{k}{2}; \frac{1}{2}; - \frac{1}{X_n} \right) -1  \right\}.
\end{align*}
Now plugging $X_n = \frac{n^2 x}{\pi^2 }$,  the above expression reduces to 
\begin{align}
V(x, k)  & =  \frac{\Gamma\left( \frac{k}{2} \right)}{x^{\frac{k}{2}}}   \sum_{n=1}^\infty \frac{ \mu(n) }{n }  \left\{  {}_1F_{1} \left( \frac{k}{2}; \frac{1}{2}; - \frac{\pi^2}{n^2 x} \right) -1  \right\}  \label{with -1}\\
& = \frac{\Gamma\left( \frac{k}{2} \right)}{x^{\frac{k}{2}}}   \sum_{n=1}^\infty \frac{ \mu(n) }{n }  \left\{  {}_1F_{1} \left( \frac{k}{2}; \frac{1}{2}; - \frac{\pi^2}{n^2 x} \right)  \right\}. \label{without -1 term}
\end{align}
In the last step,  we utilized the fact that $\sum_{n=1}^\infty \frac{\mu(n)}{n}=0$,  which is equivalent to  the prime number theorem.  Here we note that the expression \eqref{with -1} is more effective than the expression \eqref{without -1 term}  to check numerically since the series \eqref{with -1} is absolutely convergent. 
Finally,  in view of \eqref{without -1 term} and \eqref{right vertical integral}, and together with \eqref{final series_with residual},  we complete the proof of Theorem \ref{first_generalization}.

\end{proof}

\begin{proof}[Theorem {\rm \ref{Agarwal_Garg_Maji bound}}][] First,  let us assume that the Riemann hypothesis is true.  Under the assumption of the Riemann hypothesis,  Littlewood  showed that,  for any $\epsilon >0$, 
\begin{align}\label{summatory_Mobius}
A(x):= \sum_{1 \leq n \leq x} \mu(n) = O_{\epsilon}(x^{\frac{1}{2}+\epsilon}). 
\end{align}
We invoke Euler's partial summation formula \eqref{Euler's summation}, with $a_n = \mu(n)$ and $f(t)= t^{-k}$,  to see that
\begin{align}\label{M(l,n)}
M(\ell;n) := \sum_{m=\ell}^{n} \frac{\mu(m)}{m^k}  & =  A(n) f(n)- A(\ell-1) f(\ell-1) - \int_{\ell-1}^n A(t) f'(t) {\rm d}t.
\end{align}
Making use of \eqref{summatory_Mobius} in \eqref{M(l,n)} and simplifying,  one can easily show that 
\begin{align}\label{bound for M(l,n)}
M(\ell;n) = O_{\epsilon}(\ell^{\frac{1}{2}-k+\epsilon}), 
\end{align}
uniformly in $n$.  Our main aim is to find an estimate for the following sum 
\begin{align*}
P_{k}(x)= \sum_{n=1}^{\infty} \frac{\mu(n)}{n^k} \exp\left({-\frac{x}{n^2}}\right).
\end{align*}
We separate the sum into two parts,  that is,  
\begin{align}\label{P_k(x^2) interms of S_1 and S_2}
P_{k}(x^2) & =  \sum_{n=1}^{\ell-1}  \frac{\mu(n)}{n^k} \exp\left({-\frac{x^2}{n^2}}\right) + \sum_{n=\ell}^{\infty}  \frac{\mu(n)}{n^k} \exp\left({-\frac{x^2}{n^2}}\right) \nonumber \\
 & := S_1(x^2) + S_2(x^2),
 \end{align}
where $\ell = [x^{1-\epsilon}]+1$.  First, we shall try to find an estimate for $S_2(x^2)$.  Using the construction \eqref{M(l,n)} of $M(l;n)$,  for an integer $N > \ell$,   one can check that
\begin{align}\label{sum upto N}
\sum_{n=\ell}^{N} \frac{\mu(n)}{n^k} \exp\left({-\frac{x^2}{n^2}}\right) & = \frac{\mu(\ell)}{\ell^k} \exp\left({-\frac{x^2}{\ell^2}}\right) + \sum_{ \ell < n \leq N} \left(  M(l; n) - M(l; n-1) \right) \exp\left({-\frac{x^2}{n^2}}\right) \nonumber \\
& = \sum_{n=\ell}^{N-1} M(l;n) \left(e^{-\frac{x^2}{n^2}}- e^{-\frac{x^2}{(n+1)^2}}   \right) + M(l; N) e^{- \frac{x^2}{N^2} }.
\end{align}
Letting $N \rightarrow \infty$ in \eqref{sum upto N} and utilizing \eqref{bound for M(l,n)}, we arrive at 
\begin{align}
S_2(x^2)= S_3(x^2) + O_{\epsilon}\left(\ell^{\frac{1}{2}-k+\epsilon} \right),  \label{S_2}
\end{align}
where 
\begin{align*}
S_3(x^2) :=  \sum_{n=\ell}^{\infty} M(l;n) \left(e^{-\frac{x^2}{n^2}}- e^{-\frac{x^2}{(n+1)^2}}   \right). 
\end{align*}
To estimate $S_3(x^2)$,  we consider a function $f(y)= e^{-\frac{x^2}{y^2}}$.  Using mean value theorem,  one can find $y_n  \in (n,  n+1)$ such that $f(n) - f(n+1) =- f'(y_n) = - \frac{2 x^2}{y_n^3} e^{-\frac{x^2}{y_n^2}}$.  Utilizing this fact and together with \eqref{bound for M(l,n)},  we get
\begin{align*}
|S_3(x^2)| \ll_{\epsilon} \ell^{\frac{1}{2}-k+\epsilon} \sum_{n=\ell}^{\infty}   \frac{2 x^2}{y_n^3} e^{-\frac{x^2}{y_n^2}}.  
\end{align*}
Further,  we divide the sum into two parts in the following way,  
\begin{align}\label{S_3}
|S_3(x^2)| \ll_{\epsilon} \ell^{\frac{1}{2}-k+\epsilon} \left(  \sum_{n=\ell}^{ [ x] } + \sum_{n= [x]+1}^\infty \right)  \frac{x^2}{y_n^3} e^{-\frac{x^2}{y_n^2}}.
\end{align}
The finite sum can be trivially bounded by
\begin{align}\label{S_4}
\sum_{n=\ell}^{[x]} \frac{x^2}{y_n^3} e^{-\frac{x^2}{y_n^2}} \ll \sum_{n = \ell}^\infty \frac{x^2}{n^3} \ll \frac{x^2}{l^2} \ll x^{2 \epsilon},
\end{align}
as $\ell = [x^{1-\epsilon}] + 1$. 
Now the infinite part can be bounded by
\begin{align} \label{S_5}
\sum_{n= [x]+1}^\infty   \frac{x^2}{y_n^3} e^{-\frac{x^2}{y_n^2}} \ll \sum_{n >  x} \frac{x^2}{n^3} \ll O(1).
\end{align}
Here and in the previous inequality,  we have used the fact that $\sum_{n>x}  n^{-s}=O(x^{1-s})$ for $\Re(s)>1$.  Plugging \eqref{S_4} and \eqref{S_5} in \eqref{S_3}, we obtain
\begin{align*}
|S_3(x^2)| \ll_{\epsilon} x^{\frac{1}{2}-k+\epsilon}.
\end{align*}
Finally,  substitute this bound of $S_3(x^2)$ in \eqref{S_2} to see
\begin{align}\label{final bound of S_2}
|S_2(x^2)| = O_{\epsilon}  \left( x^{\frac{1}{2}-k+\epsilon}  \right).
\end{align}
Now we shall concentrate on $S_1(x^2)$.  We write
\begin{align*}
|S_1(x^2)| & =\left| \sum_{n=1}^{\ell-1}  \frac{\mu(n)}{n^k} \exp\left({-\frac{x^2}{n^2}}\right) \right|  
\leq  \sum_{n=1}^{\ell-1} \exp\left({-\frac{x^2}{l^2}}\right).
\end{align*}
This implies that 
\begin{align}\label{final bound of S_1}
S_1(x^2) = O \left( x^{1-\epsilon} \exp( - x^{2\epsilon} ) \right),
\end{align}
as $\ell = [x^{1-\epsilon}] + 1$.  In view of \eqref{final bound of S_1} and \eqref{final bound of S_2},  one can observe that the bound for $S_1(x^2)$ goes to zero much faster than the bound for $S_2(x^2)$ as $x \rightarrow \infty$.  Therefore,  \eqref{P_k(x^2) interms of S_1 and S_2} yields that
\begin{align*}
P_k( x^2) = O_{\epsilon}  \left( x^{\frac{1}{2}-k+\epsilon}  \right).
\end{align*}
Finally,  replacing $x$ by $\sqrt{x}$,  one can deduce \eqref{AGM_bound}.
Now assuming \eqref{AGM_bound}, we shall try to show that the Riemann hypothesis is true.  From Lemma \ref{Mellin_P_k(x)},  we know that 
\begin{align}\label{Lemma_Mellin_P_k(x)}
 \zeta(2s+k) \int_{0}^{\infty} x^{-s-1} P_k(x) {\rm d}x = \Gamma(-s)
\end{align}
valid in the region $\frac{1-k}{2} <\Re(s) < 1$, except at $s=0$. 
Next, we try to extend the region of validity of this lemma on the left half plane.  
For a large positive integer $N$,  we separate the above integral in the following way:
\begin{align*}
 \zeta(2s+k) \left( \int_{0}^{N} +  \int_{N}^{\infty}  \right) x^{-s-1}  P_k(x) {\rm d}x = \Gamma(-s)
\end{align*}
Employing the bound \eqref{AGM_bound} for $P_k(x)$,  one can easily show that the unbounded part of the above integration is finite if $\Re(s)> \frac{1}{4}-\frac{k}{2}$.  This will allow us to extend the analyticity of the identity \eqref{Lemma_Mellin_P_k(x)} in the region $ \frac{1}{4}-\frac{k}{2} <\Re(s) < \frac{1-k}{2}$.  Now we observe that the right side of \eqref{Lemma_Mellin_P_k(x)} is never zero in the region $ \frac{1}{4}-\frac{k}{2} <\Re(s) < \frac{1-k}{2}$,  which implies that $\zeta(2s+k)$ has no zero in the same region.
Equivalently,  we can say that $\zeta(s)$ has no zero in the region $ \frac{1}{2}< \Re(s) <1$ and thus the functional equation of $\zeta(s)$ implies that it has no zero $0< \Re(s) <\frac{1}{2}$.  This proves that all the non-trivial zeros of $\zeta(s)$ must lie on $\Re(s)=\frac{1}{2}$.  

\end{proof}

\section{Final Remarks}
In the current paper,  we established a one-variable generalization of the identity \eqref{Hardy-Littlewood}. 
Hardy and Littlewood \cite[p.~161]{HL-1916} pointed out that the absolute convergence of the series  in \eqref{Hardy-Littlewood},  over the non-trivial zeros of $\zeta(s)$, is not so immediate even after the assumption of the Riemann hypothesis.  We remark that the identity \eqref{Hardy-Littlewood} and our identity \eqref{AGM} are both valid under the assumption of the simplicity hypothesis of the non-trivial zeros of $\zeta(s)$ and we do not require the assumption of the Riemann hypothesis.  In \cite{JMS},  authors explained in more details why the convergence of this kind of series with $\zeta'(\rho)$ in the denominator is more delicate.
Another important observation is that the series 
$ \sum_{n=1}^{\infty} \frac{\mu(n)}{n^k} \exp\left({-\frac{x}{n^2}}\right) 
$
is absolutely convergent for any $k>1$,  whereas for $k=1$ the convergence is not absolute.  
 
 The identity \eqref{Hardy-Littlewood} has been studied by many mathematicians in various directions.  A connection with Wiener's Tauberian theory and the Fourier reciprocity has been established by  Bhaskaran \cite{bhas}.  A character analogue of  \eqref{Rama_Hardy_Little} was given by Dixit \cite{dixit12},  and further a one-variable generalization involving ${}_1F_{1}$ hypergeometric function was obtained by  Dixit, Roy and Zaharescu \cite{DRZ-character}.  Again,  Dixit et al. \cite{DRZ} established a generalization of \eqref{Rama_Hardy_Little} for Hecke eigenform.  In the same paper,  they also obtained a Riesz-type criteria for the Riemann hypothesis.  Recently,  an extension  for the Dedekind zeta function obtained by Dixit,  Gupta and Vatwani \cite{DGV21}.  They also established an equivalent criteria for the extended Riemann hypothesis.  Banerjee and Kumar \cite{BK21} derived an analogous identity corresponding to $L$-functions associated to the primtive cusp forms over $\Gamma_0(N)$ and gave an equivalent criteria for the grand Riemann hypothesis. 
 In a forthcoming paper,  we \cite{GM} are trying to find a character analogue of \eqref{AGM}, which will enable us to find an equivalent criteria for the generalized Riemann hypothesis for the Dirichlet $L$-function.

{\bf Acknowledgement:} The third author wants to thank SERB for the Start-Up Research Grant SRG/2020/000144.


\begin{thebibliography}{99}

\bibitem{BK21} S.~Banerjee and R.~Kumar,  \emph{Equivalent criterion for the grand Riemann hypothesis associated to Maass cusp forms},  submitted for publication. 

\bibitem{BCB-V} B.~C.~ Berndt, Ramanujan’s Notebooks, Part V, Springer-Verlag, New York, 1998.


\bibitem{bhas} R.~Bhaskaran, On the versatility of Ramanujan's ideas, Ramanujan Visiting Lectures, Technical Report 4, Madurai Kamraj University, 118--129 (1997).



\bibitem{dixit12} A.~Dixit, Character analogues of Ramanujan-type integrals involving the Riemann $\Xi$-function, {\it Pacific J. Math.} {\bf 255} (2012), 317--348.

\bibitem{DGV21} A. ~Dixit, S. ~Gupta and A. ~Vatwani,  \emph{A modular relation involving non-trivial zeros of the Dedekind zeta function, and the generalized Riemann hypothesis},  submitted for publication.

\bibitem{DRZ} A.~Dixit, A.~Roy, and A.~Zaharescu, Ramanujan-Hardy-Littlewood-Riesz phenomena for Hecke forms,  {\it J. Math. Anal. Appl.} {\bf 426} (2015),  594--611.

\bibitem{DRZ-character} A.~Dixit, A.~Roy, and A.~Zaharescu,  Riesz-type criteria and theta transformation analogues,  {\it J.  Number Theory} {\bf 160} (2016) 385--408.

\bibitem{GM} M.~Garg and B.~Maji,  An equivalent criteria for the generalized Riemann hypothesis,  in preparation,  2022.  

\bibitem{HL-1916} G.~H.~ Hardy and J.~E.~Littlewood, Contributions to the theory of the Riemann zeta-function and the theory of the distribution of primes, {\it Acta Math.} {\bf 41} (1916), 119--196.


\bibitem{IK} H.~Iwaniec,  E.~Kowalski, Analytic number theory, American Mathematical Society Colloquium Publications, vol. 53, 2004. 

\bibitem{JMS} A.~Juyal,  B.~Maji,  S.~Satyanarayana,  An exact formula for a Lambert series associated to a cusp form and the M\"{o}bius function,  {\it Ramanujan J.} 2021.  


\bibitem{Murty}
M. ~Ram Murty,  Problems in Analytic Number Theory,  Second Edition,  Springer,  2008.  


\bibitem{NIST} F.~W.~J.~Olver, D.~W.~Lozier, R.~F.~Boisvert,  C.~W.~Clark, eds., NIST Handbook of Mathematical Functions, Cambridge University Press, Cambridge, 2010.

\bibitem{PK} R.~B.~ Paris and D.~ Kaminski, Asymptotics and Mellin–Barnes Integrals, Encyclopedia of Mathematics and Its Applications, vol.~85, Cambridge University Press, Cambridge, 2001.


\bibitem{Rama_2nd_Notebook}
S.~Ramanujan,  Notebooks of Ramanujan,  Vol 2,  Tata Institute of FundamentaI Research,  Bombay, 1957.

 
\bibitem{Riesz} M. ~Riesz,  \emph{Sur l'hypoth\'{e}se de Riemann},  Acta Math.  {\bf 40} (1916) 185--190.
 
 


 \bibitem{Tit} E.~C.~Titchmarsh, The Theory of the Riemann Zeta-function, Clarendon Press, Oxford, 1986.


\end{thebibliography}
\end{document}